\newtheorem{lemma}{Lemma}
\title{Loss Functions for Inventory Control}
\author{Steven R. Pauly \\ Email: s.pauly@slimstock.com}
\date{January 2025}
\begin{document}
\maketitle

\begin{abstract}
    In this paper, we provide analytic expressions for the first-order loss function, the complementary loss function and the second-order loss function for several probability distributions.
    These loss functions are important functions in inventory optimization and other quantitative fields.
    For several reasons, which will become apparent throughout this paper, the implementation of these loss functions prefers the use of an analytic expression, only using standard probability functions.
    However, complete and consistent references of analytic expressions for these loss functions are lacking in literature.
    This paper aims to close this gap and can serve as a reference for researchers, software engineers and practitioners that are concerned with the optimization of a quantitative system.
    This should lead directly to easily using different probability distributions in quantitive models which is at the core of optimization.
    Also, this paper serves as a broad introduction to loss functions and their use in inventory control.
\end{abstract}

\textbf{Keywords} : \textbf{loss functions, inventory control, probability distributions, analytic expressions, applied optimization, first-order loss function, second-order loss function, complementary loss function}

\section{Introduction}
\begin{flushleft}
    The first-order loss function, the complementary loss function and the second-order loss function are functions that are used extensively in quantitative fields and especially in inventory control (\cite{Rossi_2014}).
    However, these functions require the calculation of integrals or large sums.
    Analytic expressions circumvent this by expressing the functions in terms of standard probability functions that are implemented efficiently and accurately in all modern software packages.
    Also, if the arguments of these functions are large, numerical accuracy is higher for these analytic expressions (\cite{zipkin2000foundations}).
    That is why these analytic expressions are useful for researchers, software engineers and practitioners in the field of inventory control or other quantitative fields.
\end{flushleft}
\begin{flushleft}
    The main goal of this paper is to provide analytic expressions for loss functions for probability distributions that are commonly encountered in inventory control.
    Although our focus will be on the application to inventory control, the content of this paper can easily be transferred to other quantitative fields.
    In this way, this paper can serve as a reference for any person active in a quantitative field and in need of efficiently and effectively calculating loss functions for several probability distributions.
    We begin in section 2 by explaining these loss functions and their properties.
    In section 3, we look at the state of current literature on these functions and their analytic expressions.
    Then, in section 4, we briefly summarize eight probability distributions that are frequently encountered in inventory control theory.
    We also comment on the distribution's specific use in inventory control theory.
    In section 5, we provide, for every distribution, analytic expressions for the first- and second-order loss function, as well for the complementary loss function.
    We end by giving a brief conclusion and a glimps on possible further research in section 6.
\end{flushleft}
\section{The first-order, second-order and complementary loss function}
\begin{flushleft}
    In this section, we will discuss the mathematical definition of the three loss functions that are the main subject of this paper.
    From their mathematical definition, we can look at how analytic expressions could be found, which is the main goal of the remainder of the paper.
    Let us start with the first-order loss function.
\end{flushleft}
\begin{flushleft}
    The first-order loss function of a continuous probability distribution can be mathematically expressed as
    \begin{equation}
        L_1(r) = \mathbb{E}[[X-r]^+] = \int_r^\infty (x-r)f(x)dx,
    \end{equation}
    where $f(.)$ is the PDF of the distribution (or the PMF when the distribution is discrete).
    By applying the Leibniz integral rule, we can find the derivative of this function.
    The Leibniz integral rule is known to be
    \begin{equation*}
        \frac{d}{dx}\left(\int_{a(x)}^{b(x)}f(x, t)dt\right) = f(x, b(x)) \cdot \frac{d}{dx}b(x)-f(x, a(x)) \cdot \frac{d}{dx}a(x)+\int_{a(x)}^{b(x)}\frac{\partial}{\partial x} f(x, t)dt.
    \end{equation*}
    But as the limits of our function are constants, we can use the special case:
    \begin{equation} \label{eq:2}
        \frac{d}{dx}\left(\int_{a}^{b}f(x, t)dt\right) = \int_{a}^{b}\frac{\partial}{\partial x} f(x, t)dt.
    \end{equation}
    Applying this to the first-order loss function gives us
    \begin{equation}
        \begin{gathered}
            \frac{d}{dr}\left(\int_{r}^{\infty} (x-r)f(x)dx\right)  = \int_{r}^{\infty}\frac{\partial}{\partial r} (x-r)f(x)dx \\
            = \int_{r}^{\infty}-f(x)dx                                                                                         \\
            = F(r)-1,
        \end{gathered}
    \end{equation}
    where $F(\cdot)$ is the CDF of the probability distribution.
    This function is thus non-negative, nonincreasing and convex in $r$.
    Also, it is a well-known result that this function can be rewritten as
    \begin{equation}
        L_1(r) = \int_r^\infty 1-F(x) dx,
    \end{equation}
    which follows from integration by parts.
    The first-order loss function for a discrete distribution is written as
    \begin{equation}
        L_1(r) = \mathbb{E}[[X-r]^+] = \sum_{x \ge r} (x-r)f(x).
    \end{equation}
\end{flushleft}
\begin{flushleft}
    Next to the first-order loss function, we have the complementary loss function.
    This is defined, for the continuous and for the discrete counterpart respectively, as
    \begin{equation}
        L_c(r) = \mathbb{E}[[X-r]^-] = \int_{-\infty}^r (r-x)f(x)dx
    \end{equation}
    and
    \begin{equation}
        L_c(r) = \mathbb{E}[[X-r]^-] = \sum_{x \le r} (r-x)f(x).
    \end{equation}
    Again, by applying the Leibniz integral rule as shown in equation \ref{eq:2}, we can find the derivative of this function to be $F(r)$.
    This function is thus negative, increasing and convex in $r$.
    The complementary loss function can also be written as
    \begin{equation}
        L_c(r) = \int_{-\infty}^{r} F(x) dx,
    \end{equation}
    which is again a result following from integration by parts.
\end{flushleft}
\begin{flushleft}
    Lastly, we have the second-order loss function.
    This function is defined, for a continuous and discrete distribution respectively, as
    \begin{equation}
        L_2(r) = \frac{1}{2}\mathbb{E}[[X-r]^+[X-r-1]^+] = \frac{1}{2}\int_r^\infty (x-r)^2f(x)dx
    \end{equation}
    and
    \begin{equation}
        L_2(r) = \frac{1}{2}\mathbb{E}[[X-r]^+[X-r-1]^+] = \frac{1}{2}\sum_{x \ge r} (x-r)(x-r-1)f(x).
    \end{equation}
    \begin{lemma}
        The second-order loss function $L_2(r)$ can also be written as
        \begin{equation}
            L_2(r) = \int_{r}^{\infty}L_1(x)dx.
        \end{equation}
    \end{lemma}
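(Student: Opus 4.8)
The plan is to verify the identity by differentiating both sides and then matching a boundary value at infinity, mirroring the Leibniz-rule technique already used in this section. Define $G(r) := \int_r^\infty L_1(x)\,dx$. First I would compute $G'(r)$: since $L_1$ is continuous, the fundamental theorem of calculus gives $G'(r) = -L_1(r)$. Next I would compute the derivative of the left-hand side. Writing $L_2(r) = \tfrac{1}{2}\int_r^\infty (x-r)^2 f(x)\,dx$ and applying the constant-limit Leibniz rule of equation~\ref{eq:2} to the integrand $\tfrac{1}{2}(x-r)^2 f(x)$, whose partial derivative in $r$ is $-(x-r)f(x)$, yields $L_2'(r) = -\int_r^\infty (x-r) f(x)\,dx = -L_1(r)$. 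Hence $L_2$ and $G$ have the same derivative, so $L_2(r) - G(r)$ is constant.

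It then remains to pin down that constant, which I would do by letting $r \to \infty$. Assuming the distribution has a finite second moment (so that $L_2$, and hence $L_1$ and $G$, are finite everywhere), the tails of the defining integrals vanish: $L_2(r) = \tfrac{1}{2}\int_r^\infty (x-r)^2 f(x)\,dx \to 0$, and since $0 \le L_1(x) \to 0$ with $L_1$ integrable on $[r,\infty)$, also $G(r) = \int_r^\infty L_1(x)\,dx \to 0$. Therefore the constant is zero and $L_2(r) = G(r)$, as claimed.

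An alternative and perhaps more transparent route is a \emph{direct} manipulation: substitute the definition of $L_1$ to get $G(r) = \int_r^\infty \int_x^\infty (y-x) f(y)\,dy\,dx$ and swap the order of integration. The region $\{(x,y) : x \ge r,\ y \ge x\}$ equals $\{(x,y) : y \ge r,\ r \le x \le y\}$, so $G(r) = \int_r^\infty f(y) \left(\int_r^y (y-x)\,dx\right) dy = \int_r^\infty f(y)\,\tfrac{(y-r)^2}{2}\,dy = L_2(r)$. The interchange is justified by Tonelli's theorem, since the integrand $(y-x)f(y)$ is nonnegative on the region of integration; the discrete analogue follows identically with sums in place of integrals.

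The main obstacle I anticipate is not the algebra but the regularity bookkeeping: one must assume (or flag as a standing hypothesis) enough moment conditions for all three loss functions to be well defined and for the limits at infinity to hold, and one must be a little careful applying equation~\ref{eq:2} to an improper integral whose integrand depends on $r$ both through the lower limit and through the factor $(x-r)^2$ — strictly, equation~\ref{eq:2} is stated for finite limits, so a limiting argument over $[r,M]$ with $M \to \infty$ may be needed to make that step fully rigorous.
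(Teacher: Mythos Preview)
Your proposal is correct but follows a different path from the paper. The paper proceeds by direct computation: in Part~I it applies integration by parts twice to $\frac{1}{2}\int_r^\infty x^2 f(x)\,dx$, obtaining $\frac{r^2}{2}(1-F(r)) + rL_1(r) + \int_r^\infty L_1(x)\,dx$; in Part~II it expands $(x-r)^2$ in $L_2(r)$, rewrites each resulting term using $F$, $L_1$, and the identity from Part~I, and watches everything cancel down to $\int_r^\infty L_1(x)\,dx$. Your derivative-matching argument is shorter and reuses the Leibniz-rule computations already carried out in the section, at the cost of the boundary discussion at $r\to\infty$ that you rightly flag. Your Tonelli alternative is shorter still and, because the integrand $(y-x)f(y)$ is nonnegative on the region of integration, requires no moment hypotheses for the interchange --- it delivers the identity unconditionally and carries over to the discrete case verbatim. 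The paper's route is more laborious but entirely elementary, avoiding any limiting or measure-theoretic justification.
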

    \begin{proof}
        $ $\newline
        $ $\newline
        Part I.
        By defining $u(x) = \frac{1}{2}x^2$, $u'(x) = x$, $v(x) = -(1-F(x))$ and $v'(x) = f(x)$, we can use integration by parts to solve $\frac{1}{2}\int_{r}^{\infty}x^2f(x)dx$:
        \begin{equation} \label{eq:12}
            \begin{gathered}
                \frac{1}{2}\int_{r}^{\infty}x^2f(x)dx = \left[-\frac{1}{2}x^2(1-F(x))\right]_{r}^{\infty} + \int_{r}^{\infty}x(1-F(x))dx \\
                = \frac{r^2}{2}(1-F(r))+\int_{r}^{\infty}x(1-F(x))dx.
            \end{gathered}
        \end{equation}
        $ $\newline
        Now, by again applying integration by parts and defining $u(x) = x$, $u'(x) = 1$, $v(x) = -L_1(x)$ and $v'(x) = 1-F(x)$, we can further solve the result of \ref{eq:12}:
        \begin{equation*}
            \begin{gathered}
                \frac{r^2}{2}(1-F(r))+\int_{r}^{\infty}x(1-F(x))dx \\
                = \frac{r^2}{2}(1-F(r))+\left[-xL_1(x)\right]_{r}^{\infty}+\int_{r}^{\infty}L_1(x)dx \\
                = \frac{r^2}{2}(1-F(r))+rL_1(r)+\int_{r}^{\infty}L_1(x)dx.
            \end{gathered}
        \end{equation*}
        $ $\newline
        Part II.
        We can now directly prove our statement by using the result from part I.
        \begin{equation*}
            \begin{gathered}
                \frac{1}{2}\int_{r}^{\infty} (x-r)^2f(x)dx \\
                = \frac{1}{2}\left(\int_{r}^{\infty}x^2f(x)dx - \int_{r}^{\infty}2xrf(x)dx + \int_{r}^{\infty}r^2f(x)dx\right). \\
                = \frac{1}{2}\int_{r}^{\infty}x^2f(x)dx - r\int_{r}^{\infty}xf(x)dx + \frac{r^2}{2}(1-F(r)). \\
                = \frac{1}{2}\int_{r}^{\infty}x^2f(x)dx - r\left[r(1-F(r))+\int_{r}^{\infty}(1-F(x))dx\right] + \frac{r^2}{2}(1-F(r)). \\
                = \frac{1}{2}\int_{r}^{\infty}x^2f(x)dx - r^2(1-F(r))-rL_1(r)+ \frac{r^2}{2}(1-F(r)). \\
                = \frac{1}{2}\int_{r}^{\infty}x^2f(x)dx - \frac{r^2}{2}(1-F(r))-rL_1(r). \\
                = \frac{r^2}{2}(1-F(r))+rL_1(r) + \int_{r}^{\infty}L_1(x)dx- \frac{r^2}{2}(1-F(r))-rL_1(r). \\
                = \int_{r}^{\infty}L_1(x)dx.
            \end{gathered}
        \end{equation*}
    \end{proof}
    Applying equation \ref{eq:2} to this function, gives us
    \begin{equation}
        \begin{gathered}
            \frac{d}{dr}\left(\frac{1}{2}\int_{r}^{\infty} (x-r)f(x)dx\right)  = \frac{1}{2}\int_{r}^{\infty}\frac{\partial}{\partial r} (x-r)^2f(x)dx \\
            = -\int_{r}^{\infty}(x-r)f(x)dx                                                                                         \\
            = -L_1(r).
        \end{gathered}
    \end{equation}
    This function is, just like the first-order loss function; non-negative, nonincreasing and convex in $r$.
    Note that $L_1(\infty) = 0$, $L_c(\infty) = \infty$ and $L_2(\infty) = 0$.
\end{flushleft}
\begin{flushleft}
    Like we already mentioned, in all fields that make use of stochastic modelling, these loss functions are of use (\cite{b07c7eb1-7b50-3adc-9ee9-442be763eb69}).
    For example, in portfolio theory, these loss functions are used frequently (e.g. \cite{lossdistributions}, \cite{doi:10.1080/14697680903081881}, \cite{02d5ea92-1a76-39f3-92d6-b5f587e3ae55}, \cite{article}).
    In fields other than inventory control, though, these loss functions are known as so-called partial moments to the $n$-th order with respect to a reference point $r$.
    We can distinct and define a lower and an upper partial moment to the $n$-th order as such:
    \begin{equation*}
        \int_{-\infty}^{r}(r-x)^nf(x)dx,
    \end{equation*}
    \begin{equation*}
        \int_{r}^{\infty}(x-r)^nf(x)dx.
    \end{equation*}
    We can thus look at the first-order and second-order loss as an upper partial moment to the first and second order and the complementary loss function as a lower partial moment to the first order.
\end{flushleft}
\begin{flushleft}
    Let us now manipulate these loss functions to see if we can gain some other insights.
    Let us start with the first-order loss function:
    \begin{equation*}
        \begin{gathered}
            L_1(r) = \int_r^\infty (x-r)f(x)dx \\
            = \int_r^\infty xf(x)dx - r(1-F(r)) \\
            = \int_{-\infty}^{\infty} xf(x)dx - \int_{-\infty}^r xf(x)dx - r(1-F(r)) \\
        \end{gathered}
    \end{equation*}
    \begin{equation} \label{eq:14}
        = \mathbb{E}[X]- \int_{-\infty}^r xf(x)dx - r(1-F(r)).
    \end{equation}
    This result, together with a slight manipulation on the complementary loss function:
    \begin{equation*}
        L_c(r) = \int_{-\infty}^r (r-x)f(x)dx
    \end{equation*}
    \begin{equation}
        = rF(r) - \int_{-\infty}^r xf(x)dx,
    \end{equation}
    gives us a direct relation between the first-order and complementary loss function:
    \begin{equation} \label{eq:16}
        L_1(r) = L_c(r) - r + \mathbb{E}[X].
    \end{equation}
    Also, manipulating the first-order and complementary loss function leads to the insight that the only integral we need to solve, besides the CDF which is readily available in almost all software packages, is the so-called first-order lower partial \textit{raw} moment:
    \begin{equation} \label{eq:17}
        \int_{-\infty}^r xf(x)dx.
    \end{equation}
    We can perform similar manipulations to the second-order loss function:
    \begin{equation} \label{eq:18}
        \begin{gathered}
            L_2(r) = \frac{1}{2}\int_r^\infty (x-r)^2f(x)dx \\
            = \frac{1}{2}\int_r^\infty x^2f(x)dx - r\int_r^\infty xf(x)dx + \frac{r^2}{2}\int_r^\infty f(x)dx \\
            = \frac{1}{2}\int_r^\infty x^2f(x)dx - r\left[\int_{-\infty}^\infty xf(x)dx - \int_{-\infty}^r xf(x)dx\right] + \frac{r^2}{2}(1-F(r)) \\
            = \frac{1}{2}\int_r^\infty x^2f(x)dx - r\mathbb{E}[X] + r\int_{-\infty}^r xf(x)dx + \frac{r^2}{2}(1-F(r)).
        \end{gathered}
    \end{equation}
    $\int_r^\infty x^2f(x)dx$ can be seen as the second-order upper partial \textit{raw} moment.
    This can be split up in the second-order \textit{raw} moment and the second-order lower partial \textit{raw} moment:
    \begin{equation*}
        \int_r^\infty x^2f(x)dx = \int_{-\infty}^\infty x^2f(x)dx - \int_{-\infty}^r x^2f(x)dx,
    \end{equation*}
    where we know that
    \begin{equation*}
        \int_{-\infty}^\infty x^2f(x)dx = Var(X) + \mathbb{E}[X]^2.
    \end{equation*}
    And thus $\int_r^\infty x^2f(x)dx$ can be written as
    \begin{equation*}
        \int_r^\infty x^2f(x)dx = Var(X) + \mathbb{E}[X]^2- \int_{-\infty}^r x^2f(x)dx.
    \end{equation*}
    Eventually, \ref{eq:18} has the following final result:
    \begin{equation} \label{eq:19}
        = \frac{1}{2}\left(Var(X) + \mathbb{E}[X]^2- \int_{-\infty}^r x^2f(x)dx\right) - r\mathbb{E}[X] + r\int_{-\infty}^r xf(x)dx + \frac{r^2}{2}(1-F(r)).
    \end{equation}
    In equation \ref{eq:19}, we have, besides the integral given in expression \ref{eq:17}, only one other integral, which is the second-order counterpart of expression \ref{eq:17}:
    \begin{equation} \label{eq:20}
        \int_{-\infty}^r x^2f(x)dx.
    \end{equation}
\end{flushleft}
\begin{flushleft}
    These loss functions also have easy-to-use analytic expressions.
    These kinds of expressions rely only on the parameters and the probability functions of the probability distribution at hand.
    In essence, we only need to focus on manipulating \ref{eq:17} and \ref{eq:20} to an analytic expression.
    Let us go through an example using a standard normal distribution.
    We start off with expression \ref{eq:17}:
    \begin{equation} \label{eq:21}
        \int_{-\infty}^r xf(x)dx,
    \end{equation}
    where now, $x$ is a random variable that follows a \textit{standard} normal distribution with a mean of $0$ and a standard deviation of $1$ and $f(x)$ is the PDF of the standard normal distribution, evaluated at $x$.
    The PDF of the standard normal distribution is known to be
    \begin{equation*}
        f(x) = \frac{e^\frac{-x^2}{2}}{\sqrt{2\pi}}.
    \end{equation*}
    We can now apply integration by substitution. Defining $u=\frac{-x^2}{2}$ and $\frac{du}{dx} = -x$, we can write expression \ref{eq:21} as
    \begin{equation*}
        -\frac{1}{\sqrt{2\pi}}\int_{-\infty}^r e^udu,
    \end{equation*}
    which is easily solved using the fundamental theorem of calculus. The result is
    \begin{equation} \label{eq:22}
        -\frac{e^{\frac{-r^2}{2}} }{\sqrt{2\pi}}= -f(r).
    \end{equation}
    Now, look at expression \ref{eq:20}.
    To solve this, define $u(x)= x, u'(x)= 1, v(x)=e^{\frac{-x^2}{2}}$ and $v'(x)= -xe^{\frac{-x^2}{2}}$.
    Using integration by parts, we can write
    \begin{equation} \label{eq:23}
        -\int_{-\infty}^r u(x)v'(x)dx = \int_{-\infty}^r e^{\frac{-x^2}{2}} - \left[xe^{\frac{-x^2}{2}}\right]_{-\infty}^r = F(r) - rf(r).
    \end{equation}
\end{flushleft}
\begin{flushleft}
    Using \ref{eq:14}, \ref{eq:16}, \ref{eq:19}, \ref{eq:22} and \ref{eq:23}, we can write the first-order, second-order and complementary loss functions for the standard normal distribution as
    \begin{equation}
        \begin{gathered}
            L_1(r) = \mathbb{E}[X]- \int_{-\infty}^r xf(x)dx - r(1-F(r)) \\
            = -r(1-F(r)) + f(r),
        \end{gathered}
    \end{equation}
    \begin{equation}
        \begin{gathered}
            L_c(r) = rF(r) - \int_{-\infty}^r xf(x)dx \\
            = rF(r) + f(r)
        \end{gathered}
    \end{equation}
    and
    \begin{equation}
        \begin{gathered}
            L_2(r) = \frac{1}{2}\left(Var(X) + \mathbb{E}[X]^2- \int_{-\infty}^r x^2f(x)dx\right) - r\mathbb{E}[X] + r\int_{-\infty}^r xf(x)dx + \frac{r^2}{2}(1-F(r)) \\
            = \frac{1}{2}\left(1 - \int_{-\infty}^r x^2f(x)dx\right) + r\int_{-\infty}^r xf(x)dx + \frac{r^2}{2}(1-F(r)) \\
            = \frac{1}{2}\left(1 - F(r) + rf(r)\right) - rf(r) + \frac{r^2}{2}(1-F(r)) \\
            = \frac{1}{2}\left[(r^2+1)(1-F(r))-rf(r)\right].
        \end{gathered}
    \end{equation}
    It are these analytic expressions for several important probability distributions in inventory control that are the subject of this paper.
    In the upcoming section, we will focus on the current literature of these loss functions and their analytic expressions.
\end{flushleft}
\section{Literature review}
\begin{flushleft}
    The use of these loss functions goes way back. Conceptually, these loss functions go back to the newsvendor problem, first implicitly introduced in \cite{745ab1c9-7630-3ecd-b4b1-574fb6a20b59}.
    If we focus on the literature of inventory control, as we will from now on, one of the first times these loss functions were explicitly mentioned in inventory control theory was in \cite{brown1967decision}.
    From then on, the three loss functions, that were introduced in the previous section, were extensively used in inventory control (\cite{Rossi_2014}, \cite{b07c7eb1-7b50-3adc-9ee9-442be763eb69}).
    Empirical evidence of this are the numerous inventory textbooks that explicitly mention these loss functions (\cite{RePEc:spr:isorms:978-3-319-15729-0}, \cite{brown1967decision}, \cite{nahmias2015production}, \cite{silver1998inventory}, \cite{zipkin2000foundations} is just a small collection of them).
    But why are these loss functions so useful in inventory control?
\end{flushleft}
\begin{flushleft}
    This is because, in inventory control, we try to determine an optimal decision policy with respect to certain performance measures.
    Since the beginning of scientific inventory control, inventory control looked for the structure of these optimal policies (\cite{33fd5e2d-8428-3331-b206-ff93e51b7613}, \cite{Scarf1960}).
    By modelling the inventory system as a Markov Decision Process, researchers quickly found out that in many cases, an optimal policy was governed by just a couple of, often fixed, decision parameters.
    Obviously, the inventory system itself is governed by random variables. To find these optimal decision parameters for a given inventory system, it was needed to evaluate them with respect to the randomness in the system, represented by a probability distribution.
    And if we look at these loss functions; they are all evaluating a function that is governed by a random variable $x$ in function of a fixed parameter $r$.
    These loss functions can thus help in setting up performance meaures and are therefore heavily used in almost all objective functions in all areas of inventory control, which is shown by the large body of literature using them.
    We want to show the reader, via a numerical example, how these loss functions pop up when optimizing an inventory system.
\end{flushleft}
\begin{flushleft}
    We will follow one of the classic approaches for optimizing an arbitrary inventory system and then pick a specific inventory system to complete the example.
    The first step in optimizing an inventory system, and thus a necessary part, is building a stochastic demand model (over the so-called lead time).
    Once this stochastic demand model is set up, we need to find the steady state distribution of the inventory position, based on the chosen ordering policy.
    From this, we can determine the inventory level distribution of the system in steady state.
    The latter distribution can then be used to derive certain performance measures of the inventory system that can eventually be used in optimizing the policy's parameters.
\end{flushleft}
\begin{flushleft}
    We will continue with a concrete example of this process and look at a classic inventory system that is continuously reviewed, the lead time $L$ is fixed and the stochastic demand model is characterized by a continuous distribution.
    In these settings, it is known that an (r, Q) policy is optimal under very general conditions (\cite{ArrowKarlinScarf}). In this policy, we use a reorder point $r$ and whenever our inventory position drops below this reorder point, we order a given quantity $Q$.
    Our task is thus to find the optimal values for $r$ and $Q$.
    We know from this kind of system, that the inventory position in steady state is uniformly distributed on the interval $[r, r+Q]$ (\cite{RePEc:spr:isorms:978-3-319-15729-0}).
    The relationship between the inventory position in steady state at an arbitrary point in time $t$, the demand during the interval $[t, t+L]$ and the inventory level at time $t+L$ is then very simple:
    \begin{equation} \label{eq:27}
        IL(t+L) = IP(t) - DDLT(t+L),
    \end{equation}
    where $IL$ stands for the inventory level, $IP$ stands for the inventory position and $DDLT$ stands for the demand during the lead time.
    Because we know the inventory position distribution, have our demand model and because $t$ is an arbitrary point in time and thus so is $t+L$, we can, from equation \ref{eq:27}, define the steady state distribution of the inventory level in this system as
    \begin{align*}
        P(IL \le x) = \frac{1}{Q}\int_r^{r+Q}\left[1-F(u-x)\right] du,
    \end{align*}
    where $F(\cdot)$ is the CDF of the probability distribution over the lead time interval.
\end{flushleft}
\begin{flushleft}
    We are now ready to calculate some performance measures for this inventory system.
    Two of the most common performance measures are the average stock-out frequency and the average backorders, which can be expressed respectively as
    \begin{equation} \label{eq:28}
        P(IL \le 0) = F(0) = \frac{1}{Q}\int_r^{r+Q}\left[1-F(x)\right] dx
    \end{equation}
    and
    \begin{equation} \label{eq:29}
        \int_{-\infty}^0 F(x) dx = \int_{-\infty}^0 \frac{1}{Q}\int_r^{r+Q}\left[1-F(u-x)\right] dudx.
    \end{equation}
    Now, see that, by using the first-order loss function, equation \ref{eq:28} can be written as
    \begin{equation}
        \frac{1}{Q}\int_r^{r+Q}-L_1'(x) dx
    \end{equation}
    and thus as
    \begin{equation} \label{eq:31}
        \frac{L_1(r)-L_1(r+Q)}{Q}.
    \end{equation}
    Equation \ref{eq:29}, on the other hand, can be directly written as
    \begin{equation} \label{eq:32}
        \int_{-\infty}^0 \frac{1}{Q}\int_r^{r+Q}-L_1'(u-x) dudx.
    \end{equation}
    By changing the order of integration, knowing that $L_1(\infty) = 0$ and using our second-order loss function, we can write expression \ref{eq:32} as
    \begin{equation} \label{eq:33}
        \frac{L_2(r)-L_2(r+Q)}{Q}.
    \end{equation}
    From here, we can use \ref{eq:31} and \ref{eq:33} as equivalents to \ref{eq:28} and \ref{eq:29} and use these in the optimization part.
    This small and stylized example shows that these loss functions are at the very core of optimizing an inventory system.
    This is also the reason why they are used extensively in inventory control.
    In this paper, however, we are primarily focused on the analytic expressions for these loss functions.
    The question that can then be asked is: why should we use these analytic expressions?
\end{flushleft}
\begin{flushleft}
    We see some clear benefits.
    First of all, many researchers use these expressions frequently in calculations and experiments. In these calculations and experiments, they prefer analytic expressions because of both the numerical accuracy and efficiency.
    Evaluating these loss functions requires the calculation of very large sums or integrals and it is almost impossible to get a very high degree of accuracy in a very short time when performing these calculations numerically.
    To avoid numerical inaccracy and inefficiency, these analytic expressions only depend on the probability functions of a distribution which are implemented in almost all commercial software packages, in a fast and accuracte way.
    Also, the use of the cumumative probability function (CDF) can have an extra advantage to researchers and practioners, as the CDF is often used as a performance measure in many situations, especially in practice (and often known as the so-called \textit{Cycle Service Level}).
    These analytic expressions can thus easily translate this \textit{Cycle Service Level} to other service aimed performance measures (e.g. the \textit{Fill Rate}).
\end{flushleft}
\begin{flushleft}
    The biggest benefits, however, will probably come from using these analytic expressions in practice. Let us explain why.
    If we look at the practice of inventory control in combination with these loss functions, we see that almost all analytic expressions of loss functions used in practice are that of the first-order loss function for a (standard) normal distribution.
    One reason that this is the case, is that most performance measures used in practice are simple and straightforward and rely, at best, only on the first-order loss function.
    On the other hand, the normal distribution is mathematically easy to manipulate and in most mainstream resources of inventory control, they primarily do the necessary calculations using the normal distribution as the underlying demand model.
    These reasons lead to the fact that, in practice, the use of these loss functions and also the probability distributions used to represent the underlying demand model are very limited.
    This has led to suboptimization in practice, as it is clear from literature that proper inventory control requires the use of other probability distributions as well as other loss functions (\cite{RePEc:spr:isorms:978-3-319-15729-0}).
\end{flushleft}
\begin{flushleft}
    If we look at inventory control literature and look for analytic expressions of these loss functions, we observe that, for the first-order loss function and a normal distribution, this already popped up regurlarly without explicitly mentioning it as a loss function (e.g. in \cite{hadley1963analysis}).
    Seminal textbooks on inventory control that provide analytic expressions for the first-order loss function for a handful of distributions are, among others, \cite{silver1998inventory} and \cite{zipkin2000foundations}.
    One common pattern throughout these textbooks, however, is that these expressions are limited and somewhat scattershot in the appendix and never nicely collated prominently.
    The resource with the most extensive reference of these analytic expressions is probably \cite{zipkin2000foundations}.
    Here, the author introduces the first- and second-order loss functions and provides analytic expressions for some common, both continuous and discrete, probability distributions.
\end{flushleft}
\begin{flushleft}
    We believe, though, that current literature is not sufficient to help researchers and practioners use these loss functions extensively, as they are supposed to.
    Current literature is, like mentioned, often very limited regardering these analytic expressions. Or in terms of the number of probability distributions provided or in terms of the complete set (first-order, second-order and complementary) of loss functions.
    Also, because these loss functions depend on how the probability functions are defined in terms of the distribution's parameters, the scattering of these expressions is not really helpful, as this will lead to equivalent but different expressions.
    As researchers and practioners are in need of using these loss functions very frequently and often for different probability distributions, they are in need of an easy-to-access reference that is consistent and complete.
    With this paper, we aim to provide exactly that.
\end{flushleft}
\begin{flushleft}
    It is interesting to note that these loss functions have a close relation to many other functions, like, for example, the \textit{mean residual life function} and the \textit{limited expected value function} which are often used also in other areas than inventory control.
    For example, the former is a function from renewal theory that is used extensively in inventory control but also in numerous other engineering applications.
    Analytic expressions for these functions for different probability distributions are thus also implicitly provided with this paper.
    But also the other way around: papers with analytic expressions for these related funtions can give us analytic expressions for our loss functions or at least for some of them.
    When we look at the literature on these related functions and their analytic forms, however, there is, not suprisingly, not much to be found.
    One example though, is \cite{lossdistributions}, where they provide analytic expressions for the \textit{limited expected value function} $L_e(r)$:
    \begin{equation}
        L_e(r) = \mathbb{E}[min[X, r]] = \int_{-\infty}^{r} xf(x) dx + r(1-F(r)),
    \end{equation}
    in the context of finance (specifically insurance risk assessment). They do this for the following $6$ probability distributions:
    \begin{enumerate}
        \item Log-normal distribution
        \item Exponential distribution
        \item Pareto distribution
        \item Burr distribution
        \item Weibull distribution
        \item Gamma distribution
    \end{enumerate}
    The relation of this function to the first-order loss function is seen easily:
    \begin{equation} \label{eq:35}
        \begin{gathered}
            \int_r^\infty (x-r)f(x)dx \\
            = \int_{r}^{\infty} xf(x) dx - r(1-F(r)) \\
            = \int_{-\infty}^{\infty} xf(x) dx - \int_{-\infty}^{r} xf(x) dx  - r(1-F(r)) \\
            = \mathbb{E}[X] - L_e(r).
        \end{gathered}
    \end{equation}
    This relationship can directly lead to analytic expressions for the first-order loss function for the former mentioned $6$ probability distributions.
    Then, with the relation we gave in equation \ref{eq:16}, we can also easily find the analytic expression for the complementary loss function.
    For example, in \cite{lossdistributions}, they give the following analytic expression for the limited expected value function for the exponential distribution:
    \begin{equation*}
        L_e(r) = \frac{1}{\beta}(1-e^{-\beta r}),
    \end{equation*}
    with a mean of $\frac{1}{\beta}$.
    Using \ref{eq:35}, we can find the first-order loss function as
    \begin{equation*}
        L_1(r) = \mathbb{E}[X] - L_e(r) = \frac{1}{\beta}-\left[\frac{1}{\beta}(1-e^{-\beta r})\right] = \frac{e^{-\beta r}}{\beta}.
    \end{equation*}
    Subsequently, using equation \ref{eq:16}, we can find the complementary loss function as
    \begin{equation*}
        L_c(r) = L_1(r) + r - \mathbb{E}[X] = \frac{e^{-\beta r}}{\beta} + r - \frac{1}{\beta} = r-\left[\frac{1-e^{-\beta r}}{\beta}\right].
    \end{equation*}
    \cite{lossdistributions}, however, focusses on probability distributions that are not all very relevant to inventory control and does not explicitly provide the relationship between the limited expected value function and the first-order loss function.
    In this paper, we provide analytic expressions for \textit{all three} loss functions and for \textit{relevant} probability distributions that are frequently encountered in inventory control.
    The next section introduces these different probability distributions and comments on the relevance of them with respect to inventory control.
\end{flushleft}
\section{Common probability distributions in inventory control}
\subsection{Discrete distributions}
\begin{flushleft}
    We will begin by describing four discrete distributions.
    Discrete distributions are particularly useful for inventory control, as demand is nearly always a non-negative integer, i.e. it is a discrete stochastic variable.
    It is therefore natural to use a discrete distribution to model demand, especially when demand is relatively low (\cite{RePEc:spr:isorms:978-3-319-15729-0}).
    \cite{silver1998inventory} uses an empirical observed suitable boundary, to distinguish "high" and "low" demand volume, of $10$ units as the expected mean demand during the lead time.
    Throughout the remainder of this paper, when we are talking about "when demand is relatively low", the reader can take this boundary as a simple, practical definition of it.
\end{flushleft}
\begin{flushleft}
    For every distribution, we give the mean and variance, the range of the discrete random variable X, the parameters and the probability mass function.
    We will also briefly explain why and when this distribution is used in the theory of inventory control.
    We denote the probability mass function as $f(x)$ and is defined on the range.
    The cumulative distribution function is then defined as
    \begin{align*}
        F(x) = \sum_{y\le x}^{} f_y.
    \end{align*}
    If needed, we also provide the \textit{method of moments} for the probability distribution.
\end{flushleft}
\subsubsection{Negative binomial distribution}
\begin{flushleft}
    The negative binomial distribution is widely used in inventory control theory to model the demand over a lead time (\cite{NBN}, \cite{doi:10.1287/mnsc.25.8.777}, \cite{doi:10.1287/opre.33.1.134}, \cite{SYNTETOS200636}, \cite{doi:10.1057/jors.1961.8}) and there is enough empirical evidence that the negative binomial distribution can be used for many items with respect to inventory optimization(\cite{doi:10.1080/00401706.1966.10490353}).
    \cite{RePEc:spr:isorms:978-3-319-15729-0} provides a coefficient of dispersion (CD) that can be used to assign a negative binomial distribution to an item, when demand is relatively low:
    \begin{align*}
        CD = \frac{\sigma^2}{\mu} > 1.1,
    \end{align*}
    where $\sigma$ and $\mu$ are (unbiased) estimates of the standard deviation and the mean during the lead time respectively.
    Note that the negative binomial distribution cannot be parametrized when the mean is greater than or equal to the variance.
    This implies that low volume, high erratic items are often suitable to be modelled by a negative binomial distribution.
    We could also deduce this from the fact that the negative binomial distribution is equivalent to a compound Poisson distribution where the compounding distribution is logarithmic.
    Furthermore, random demand with gamma distributed lead times results in negative binomial distributed demand during the lead time (\cite{207bf20a-108d-3b3d-9619-1f323adac508}).
\end{flushleft}
\begin{flushleft}
    The range of the negative binomial distribution is the set of the non-negative integers.
    This distribution has two parameters $n$ and $p$ with $0 < p < 1$ and $n > 0$.
\end{flushleft}
\begin{align*}
    \mathbb{E}[X] = \frac{np}{1-p}
\end{align*}

\begin{align*}
    Var(X) = \frac{np}{(1-p)^2}
\end{align*}

\begin{align*}
    f(x) = {\binom{x+n-1}{n-1}}(1-p)^np^x
\end{align*}

\begin{align*}
    p = 1-\frac{\mathbb{E}[X]}{Var(X)}
\end{align*}

\begin{align*}
    n = \frac{(\mathbb{E}[X])^2}{Var(X)-\mathbb{E}[X]}
\end{align*}

\subsubsection{Geometric distribution}
\begin{flushleft}
    The geometric distribution is, in combination with inventory control, often used  as the compounding distribution in a compound Poisson distribution. This is then known as the Pólya-Aeppli distribution and is used frequently to model demand over a lead time.
    Just as the negative binomial distribution, this compound Poisson distribution is often suitable for items of low volume with highly erratic behavior.
\end{flushleft}
\begin{flushleft}
    The geometric distribution, just as the exponential distribution, has the interesting "no-aging" property, which can simplify the decision-making in inventory control when knowing analytic expressions of the loss functions (\cite{articleDas}).
    Also, if the lead time distribution is geometric, then for any arbitrary distribution of demand per unit with finite cumulants, the lead time demand is asymptotically exponential (\cite{articleCarlson}).
    Therefore, the geometric distribution is often used to model the lead time distribution (\cite{articleCarlson2}).
    Ideally, the lead time demand itself is geometrically distributed. This is, for example, the case when the demand per unit time follows a negative binomial distribution and the lead time is geometrically distributed (\cite{articleMagistad}).
    When the lead time is exponentially distributed and demand follows a Poisson distribution, and both are independent, their convolution and thus the lead time demand follows a geometric distribution (\cite{articleCarlson2}).
    This is particularly interesting because lead times are often modelled accurately with an exponential distribution and the Poisson distribution is frequently used to model demand processes.
    The geometric distribution is thus often used when we are modelling inventory systems with stochastic lead times.
    Also, the geometric distribution is commonly used to model the time until some event occurs, such as the failure of a machine (as the probability of failure is constant, regardless of the age of the machine) (\cite{zipkin2000foundations}).
    Lastly, it is interesting to point out that the geometric distribution is a special case of the negative binomial distribution with the number of failures $n$ being equal to $1$.
\end{flushleft}

\begin{flushleft}
    The range of the geometric distribution is the set of the positive integers.
    This distribution has one parameter $p$ with $0 < p < 1$.
\end{flushleft}

\begin{align*}
    \mathbb{E}[X] = \frac{1}{p}
\end{align*}

\begin{align*}
    Var(X) = \frac{1-p}{p^2}
\end{align*}

\begin{align*}
    f(x) = (1-p)^{x-1}p
\end{align*}

\begin{align*}
    p = \frac{1}{\mathbb{E}[X]}
\end{align*}

\subsubsection{Logarithmic distribution}
Just as the geometric distribution, the logarithmic distribution is often used to model the demand size in a compound Poisson distribution.
Actually, a compound Poisson distribution where the demand size is logaritmically distributed, is a negative binomial distribution.
The range is the set of the positive integers and the distribution has only one parameter $p$ with $0 < p < 1$.

\begin{align*}
    \mathbb{E}[X] = -\frac{p}{(1-p)\ln(1-p)}
\end{align*}

\begin{align*}
    Var(X) = -\frac{p(\ln(1-p)+p)}{(1-p)^2\left(\ln(1-p)\right)^2}
\end{align*}

\begin{align*}
    f(x) = -\frac{p^x}{x\ln(1-p)}
\end{align*}

\begin{align*}
    p = 1-e^{W_{-1}\left(-\frac{1}{\mathbb{E}[X]e^{\frac{1}{\mathbb{E}[X]}}}\right)+\frac{1}{\mathbb{E}[X]}},
\end{align*}
where $W_k$ is the product logarithm.

\subsubsection{Poisson distribution}
\begin{flushleft}
    The Poisson distribution is a well-known and widely used distribution in the theory of inventory control.
    It is used extensively in modelling the demand over a fixed lead time.
    Demands occur one unit at a time and in every small time interval, a demand may or may not occur.
    This leads to the Poisson distribution being a suitable demand model when demand is of low volume and not too erratic.
    \cite{RePEc:spr:isorms:978-3-319-15729-0} provides a coefficient of dispersion (CD) that can be used to assign a Poisson distribution to an item, when demand is relatively low:
    \begin{align*}
        CD = 0.9 \le \frac{\sigma^2}{\mu} \le 1.1.
    \end{align*}
\end{flushleft}
\begin{flushleft}
    Besides the modelling of the demand size, it is very often used to model the customer order intensity.
    Together with another distribution that models the demand size, like the logarithmic or geometric distribution, it then forms a compound Poisson distribution.
    Like already stated, these compound Poisson distributions are often used to accurately model erratic lead time demand.
\end{flushleft}
The range of the Poisson distribution is the set of the non-negative integers. The distribution has one parameter $\lambda$ with $\lambda > 0$.

\begin{align*}
    \mathbb{E}[X] = Var(X) = \lambda
\end{align*}

\begin{align*}
    f(x) = \frac{\lambda^xe^{-\lambda}}{x!}
\end{align*}

\subsection{Continuous distributions}
\begin{flushleft}
    We now describe four continuous distributions.
    Continuous distributions are useful for inventory control when the demand during the lead time is relatively high (\cite{RePEc:spr:isorms:978-3-319-15729-0}).
    Like already stated, \cite{silver1998inventory} uses an empirical observed suitable boundary, to distinguish "high" and "low" demand volume, of $10$ units as the expected mean demand during the lead time.
    Throughout the remainder of this paper, when we are talking about "when demand is relatively high", the reader can take this boundary as a simple, practical definition of it.
\end{flushleft}
\begin{flushleft}
    For every distribution, we again give the mean and variance, the range of the continuous random variable $X$, the parameters and the probability density function.
    We will again briefly explain why and when this distribution is used in the theory of inventory control.
    We denote the probability density function as $f(x)$ and is defined on the range.
    The cumulative distribution function is then defined as

    \begin{align*}
        F(x) = \int_{-\infty}^x f(x) dx.
    \end{align*}

    Again, if needed, we provide the \textit{method of moments} for the probability distribution.
\end{flushleft}

\subsubsection{Normal distribution}
\begin{flushleft}
    The normal distribution is probably the most used distribution to model lead time demand in inventory control when we do not know the exact distribution or this does not really matter a lot.
    One of the main reasons behind this fact is the \textit{Central Limit Theorem (CLT)}.
    This has led it to being a popular distribution, together with its mathematical convenience and wide availability in software.
    The problem with the normal distribution, however, is that there is always a small probability for negative demand.
    It is therefore best to approximate items' demand model with a normal distribution, only when demand is relatively high and the coefficient of variation (CV) is not \textit{too} high.
    Most specialized resources use a limit of $CV \le 0.5$.
\end{flushleft}
The range for the normal distribution is the real line and it has two parameters $\mu$ and $\sigma$ with
$\mu \in \mathbb{R}$ and $\sigma \in \mathbb{R}_{>0}$.

\begin{align*}
    \mathbb{E}[X] = \mu
\end{align*}

\begin{align*}
    Var(X) = \sigma^2
\end{align*}

\begin{align*}
    f(x) = \frac{1}{\sigma\sqrt{2\pi}}e^{-\frac{1}{2}\left(\frac{x-\mu}{\sigma}\right)^2}
\end{align*}

\subsubsection{Gamma distribution}
\begin{flushleft}
    The gamma distribution is used extensively in practice to model demand over a lead time when demand is relatively high (\cite{8285761c-e9d6-323a-bbd4-de8efa04aa5d}, \cite{512d921f-2e73-3182-8129-e175ce204488}, \cite{https://doi.org/10.1002/nav.3800300216}, \cite{SNYDER1984373}, \cite{TIJMS1984175}, \cite{articleTyworth}) and is popular mostly because of the fact that there is no density on negative numbers and therefore more appropriate for inventory control applications (as negative demand is normally not probable).
    This has led to the fact that many demand processes at companies were well aproximated by a gamma distribution, especially when dealing with relatively high demand and a coefficient of variation that was not considerably less than $1$.
\end{flushleft}
\begin{flushleft}
    The gamma distribution is the continuous analogue of the negative binomial distribution and just like the negative binomial distribution, it is a generalization of another distribution (in this case, the exponential distribution).
    Furthermore, the demand of perishable items can often be modelled with a gamma distribution (\cite{BROEKMEULEN20093013}, \cite{10.2307/3008211}).
\end{flushleft}
\begin{flushleft}
    The range of the gamma distribution contains the non-negative real numbers.
    This distribution has two parameters $\alpha$ and $\beta$ with $\alpha, \beta > 0$.

    \begin{align*}
        \mathbb{E}[X] = \frac{\alpha}{\beta}
    \end{align*}

    \begin{align*}
        Var(X) = \frac{\alpha}{\beta^2}
    \end{align*}

    \begin{align*}
        f(x) = \frac{\beta\left(\beta x\right)^{\alpha-1}e^{-\beta x}}{\Gamma(\alpha)}
    \end{align*}

    \begin{align*}
        \alpha = \frac{(\mathbb{E}[X])^2}{Var(X)}
    \end{align*}

    \begin{align*}
        \beta = \frac{\mathbb{E}[X]}{Var(X)}
    \end{align*}

\end{flushleft}

\subsubsection{Log-normal distribution}
\begin{flushleft}
    Althoug the log-normal distribution is not extensively used in inventory control theory, it has been found extremely valuable for analyzing statistical data arising from different fields like agriculture, economics, medicine, and so on (\cite{https://doi.org/10.2307/1235218}).
    In the area of inventory control, the log-normal distribution is often used to model the demand distribution over a time unit for a group of items (\cite{brown1959statistical}, \cite{DAS1983267}, \cite{Herron01061976}, \cite{articleHoward}, \cite{Howard1976}).
    However, the log-normal distribution is also used to model lead time demand on item level (\cite{COBB20131842}, \cite{DAS1983267}, \cite{TADIKAMALLA1979553}).
    This is mostly motivated by (1) the empirical observation that demand for an item during the delivery lead time can often be represented by a log-normal distribution (\cite{Trux1971}) and (2) the log-normal distribution has some interesting properties:
    \begin{enumerate}
        \item The log-normal distribution represents a strictly positive random variable with a lower bound, if any.
        \item It is positively skewed and can assume many shapes by changing a single parameter.
        \item It reduces the skewness of raw data to a large extent.
        \item Powers, products and multiples of log-normal distributed variables are also log-normal.
    \end{enumerate}
\end{flushleft}
\begin{flushleft}
    The range contains the positive real numbers.
    This distribution has two parameters $\mu$ and $\sigma$ with $\mu \in (-\infty, +\infty)$ and $\sigma > 0$.

    \begin{align*}
        \mathbb{E}[X] = e^{\mu+\frac{\sigma^2}{2}}
    \end{align*}

    \begin{align*}
        Var(X) = [e^{\sigma^2}-1]e^{2\mu+\sigma^2}
    \end{align*}

    \begin{align*}
        f(x) = \frac{1}{x\sigma\sqrt{2\pi}}e^{-\frac{1}{2}\left(\frac{\ln(x)-\mu}{\sigma}\right)^2}
    \end{align*}

    \begin{align*}
        \mu = \log\left(\frac{(\mathbb{E}[X])^2}{\sqrt{(\mathbb{E}[X])^2+Var(X)}}\right)
    \end{align*}

    \begin{align*}
        \sigma = \sqrt{\log\left(1+\frac{Var(X)}{(\mathbb{E}[X])^2}\right)}
    \end{align*}

\end{flushleft}

\subsubsection{Exponential distribution}
\begin{flushleft}
    The exponential distribution has shown to be very helpful in connection with inventory control (\cite{brown1984materials}).
    The exponential distribution is often used to model the distribution of a stochastic lead time (\cite{articleCarlson2}, \cite{articleDas}) but it is also a popular distribution to model demand for perishable items (\cite{doi:10.1080/17509653.2022.2134222}) and hazardous items (\cite{articleBaten}).
    In general, an exponential distribution is often used for deteriorating items (\cite{articleOphokenshi}) but also for slow moving items that are continuous in nature (\cite{SNYDER1984373}).
\end{flushleft}
\begin{flushleft}
    Besides that, just like the geometric distribution, its discrete analogue, it can simplify decision-making significantly (\cite{ArrowKarlinScarf}) and is therefore a popular distribution to use in the mathematical study of inventory control.
    The range of the exponential distribution contains the non-negative real numbers and it has one parameter $\beta$ with $\beta > 0$.

    \begin{align*}
        \mathbb{E}[X] = \frac{1}{\beta}
    \end{align*}

    \begin{align*}
        Var(X) = \frac{1}{\beta^2}
    \end{align*}

    \begin{align*}
        f(x) = \beta e^{-\beta x }
    \end{align*}

    \begin{align*}
        \beta = \frac{1}{\mathbb{E}[X]}
    \end{align*}

\end{flushleft}

\section{Analytic expressions for the first-order, second-order and complementary loss function}
\subsection{Discrete distributions}
We will now provide analytic expressions for the loss functions for the discrete distributions.
When having a discrete random variable $X$, the loss functions can be defined as follows.

\begin{enumerate}
    \item First-order loss function:

          \begin{align*}
              L_1(r) = \mathbb{E}[[X-r]^+] = \sum_{x \ge r} (x-r)f(x)
          \end{align*}

    \item Complementary loss function:

          \begin{align*}
              L_c(r) = \mathbb{E}[[X-r]^-] = \sum_{x \le r} (r-x)f(x)
          \end{align*}

    \item Second-order loss function:

          \begin{align*}
              L_2(r) = \frac{1}{2}\mathbb{E}[[X-r]^+[X-r-1]^+] = \frac{1}{2}\sum_{x \ge r} (x-r)(x-r-1)f(x)
          \end{align*}

\end{enumerate}

\subsubsection{Negative binomial distribution}

Let $F_0(x)$ represent the cumulative distribution function (CDF) of the negative binomial distribution, evaluated at $x$ and with parameters $n$ and $p$.
Then, let $F_1(x)$ represent the CDF of the negative binomial distribution, evaluated at $x$ and with parameters $n+1$ and $p$.
And finally, let $F_2 (x)$ represent the CDF of the negative binomial distribution, evaluated at $x$ and with parameters $n+2$ and $p$.

\begin{equation}
    L_1(r) = \frac{np}{1-p}[1-F_1(r-2)]-r[1-F_0(r-1)]
\end{equation}

\begin{equation}
    L_c(r) = rF_0(r-1)-\frac{np}{1-p}F_1(r-2)
\end{equation}

\begin{equation}
    L_2(r) = \left(\frac{r^2+r}{2}\right)[1-F_0(r-1)]-\frac{rnp}{1-p}[1-F_1(r-2)]+\frac{(np)^2+np^2}{2(1-p)^2}[1-F_2(r-3)]
\end{equation}

\subsubsection{Geometric distribution}

\begin{equation}
    L_1(r) = \frac{(1-p)^r}{p}
\end{equation}

\begin{equation}
    L_c(r) = \frac{(1-p)^r+pr-1}{p}
\end{equation}

\begin{equation}
    L_2(r) = \frac{(1-p)^{r+1}}{p^2}
\end{equation}

\subsubsection{Logarithmic distribution}

Define $\beta = -\frac{1}{\ln(1-p)}$.

\begin{equation}
    L_1(r) = \frac{\beta p^r}{1-p}-r[1-F(r-1)]
\end{equation}

\begin{equation}
    L_c(r) = rF(r) - \beta\left[\frac{1-p^{r+1}}{1-p}-1\right]
\end{equation}

\begin{equation}
    L_2(r) = \frac{1}{2}[r^2+r][1-F(r-1)]+\frac{\beta(2r+1)p^r}{2(p-1)}-\frac{\beta p^r[p(r-1)-r]}{2(1-p)^2}
\end{equation}

\subsubsection{Poisson distribution}

\begin{equation}
    L_1(r) = -(r-\lambda)[1-F(r)]+\lambda f(r)
\end{equation}

\begin{equation}
    L_c(r) = (r-\lambda)F(r)+\lambda f(r)
\end{equation}

\begin{equation}
    L_2(r) = \frac{1}{2}\left([(r-\lambda)^2+r][1-F(r)]-\lambda(r-\lambda)f(r)\right)
\end{equation}

\subsection{Continuous distributions}
We will now provide analytic expressions for the loss functions for the continuous distributions.
When having a continuous random variable $X$, the loss functions can be defined as follows.

\begin{enumerate}

    \item First-order loss function:

          \begin{align*}
              L_1(r) = \mathbb{E}[[X-r]^+] = \int_r^\infty (x-r)f(x)dx
          \end{align*}

    \item Complementary loss function:

          \begin{align*}
              L_c(r) = \mathbb{E}[[X-r]^-] = \int_{-\infty}^r (r-x)f(x)dx
          \end{align*}

    \item Second-order loss function:

          \begin{align*}
              L_2(r) = \frac{1}{2}\mathbb{E}[[X-r]^+[X-r-1]^+] = \frac{1}{2}\int_r^\infty (x-r)^2f(x)dx
          \end{align*}

\end{enumerate}

\subsubsection{Normal distribution}

Let $p = \frac{r-\mu}{\sigma}$. $F(p)$ is the CDF of the standard normal distribution (i.e. a normal distribution with mean and variance of $0$ and $1$ respectively) and $f(p)$ is the probability density function of the standard normal distribution.

\begin{equation}
    L_1(r) = (\mu-r)[1-F(p)]+\sigma f(p)
\end{equation}

\begin{equation}
    L_c(r) = (r-\mu)F(p)+\sigma f(p)
\end{equation}

\begin{equation}
    L_2(r) = \frac{1}{2}[(r-\mu)^2+\sigma^2][1-F(p)]-\frac{\sigma}{2}f(p)[r-\mu]
\end{equation}

\subsubsection{Gamma distribution}

Let $F_0(x)$ represent the CDF of the gamma distribution, evaluated at $x$ and with parameters $\alpha$ and $\beta$.
Then, let $F_1(x)$ represent the CDF of the gamma distribution, evaluated at $x$ and with parameters $\alpha+1$ and $\beta$.
And finally, let $F_2(x)$ represent the CDF of the gamma distribution, evaluated at $x$ and with parameters $\alpha+2$ and $\beta$.

\begin{equation}
    L_1(r) = \frac{\alpha}{\beta}[1-F_1(r)]-r[1-F_0(r)]
\end{equation}

\begin{equation}
    L_c(r) = rF_0(r)-\frac{\alpha}{\beta}F_1(r)
\end{equation}

\begin{equation}
    L_2(r) = \frac{r^2}{2}[1-F_0(r)]-\frac{r\alpha}{\beta}[1-F_1(r)]+\frac{\alpha(\alpha+1)}{2\beta^2 }[1-F_2(r)]
\end{equation}

\subsubsection{Log-normal distribution}

Let $F(p_i)$ be the CDF of the standard normal distribution and define:
$p_1 = \frac{\ln(r)-\mu-2\sigma^2}{\sigma},$
$p_2 = \frac{\ln(r)-\mu-\sigma^2}{\sigma}$
and
$p_3 = \frac{\ln(r)-\mu}{\sigma}.$

\begin{equation}
    L_1(r) = e^{\mu+\frac{\sigma^2}{2}}[1-F(p_2)]-r[1-F(p_3)]
\end{equation}

\begin{equation}
    L_c(r) = rF(p_3)-e^{\mu+\frac{\sigma^2}{2}}F(p_2)
\end{equation}

\begin{equation}
    L_2(r) = \frac{r^2}{2}[1-F(p_3)]-re^{\mu+\frac{\sigma^2}{2}}[1-F(p_2)]+\frac{e^{2\left(\mu+\frac{\sigma^2}{2}\right)}}{2}[1-F(p_1)]
\end{equation}

\subsubsection{Exponential distribution}

\begin{equation}
    L_1(r) = \frac{e^{-\beta r}}{\beta}
\end{equation}

\begin{equation}
    L_c(r) = r-\left[\frac{1-e^{-\beta r}}{\beta}\right]
\end{equation}

\begin{equation}
    L_2(r) = \frac{e^{-\beta r}}{\beta^2}
\end{equation}

\section{Conclusion and further research}
\begin{flushleft}
    In this paper, we have provided analytic expressions for three very important so-called loss functions.
    These loss functions are at the core of the calculations needed to perform inventory optimization.
    As these loss functions are depending on a specific probability distribution, we looked for the most important and widely used distributions in inventory control.
    This has led to a complete and consistent reference, available for researchers, practioners and software developers, which was lacking in current inventory theory literature.
    We believe that both practice and theory will greatly benefit from these analytic expressions.
\end{flushleft}
\begin{flushleft}
    This paper has the goal to enable researchers and practioners to easily use these loss functions in calculations for and simulations of real-life inventory systems.
    One step further would to be find easy-to-use closed-form approximations of these loss functions.
    For the normal distribution and the first-order loss function, some work has already been done here (e.g. \cite{DESCHRIJVER20121375}, \cite{Rossi_2014}, \cite{200fda1f-f795-3a50-b422-4d66e1cfba96}).
    However, inventory theory could benefit from simple closed-form approximations of all loss functions for all distributions mentioned in this paper. This would open new possibilities in finding closed-form solutions for complex inventory models.
\end{flushleft}
\begin{flushleft}
    We need to keep in mind that, especially in inventory control, simplicity is key to connect theory and practice.
    By not aiming for simplicity, we run the risk of developing a large pool of solutions which are not used in practice and thus not really add value to the companies we are implicitly adressing.
\end{flushleft}
\bibliographystyle{plain}
\bibliography{bibliography.bib}

\begin{thebibliography}{10}

\bibitem{NBN}
Narendra Agrawal and Stephen~A. Smith.
\newblock Estimating negative binomial demand for retail inventory management
  with unobservable lost sales.
\newblock {\em Naval Research Logistics (NRL)}, 43(6):839--861, 1996.

\bibitem{33fd5e2d-8428-3331-b206-ff93e51b7613}
Kenneth~J. Arrow, Theodore Harris, and Jacob Marschak.
\newblock Optimal inventory policy.
\newblock {\em Econometrica}, 19(3):250--272, 1951.

\bibitem{ArrowKarlinScarf}
Kenneth~J. Arrow, Samuel Karlin, and Herbert Scarf.
\newblock {\em Studies in the mathematical theory of inventory and production}.
\newblock Stanford University Press, 1958.

\bibitem{RePEc:spr:isorms:978-3-319-15729-0}
Sven Axsäter.
\newblock {\em {Inventory Control}}.
\newblock Number 978-3-319-15729-0 in International Series in Operations
  Research and Management Science. Springer, December 2015.

\bibitem{doi:10.1080/00401706.1966.10490353}
John~J. Bartko.
\newblock Approximating the negative binomial.
\newblock {\em Technometrics}, 8(2):345--350, 1966.

\bibitem{articleBaten}
Md~Azizul Baten and Anton Kamil.
\newblock Inventory management systems with hazardous items of two-parameter
  exponential distribution.
\newblock {\em Journal of Social Sciences}, 5, 03 2009.

\bibitem{BROEKMEULEN20093013}
Rob~A.C.M. Broekmeulen and Karel~H. {van Donselaar}.
\newblock A heuristic to manage perishable inventory with batch ordering,
  positive lead-times, and time-varying demand.
\newblock {\em Computers \& Operations Research}, 36(11):3013--3018, 2009.

\bibitem{brown1959statistical}
R.G. Brown.
\newblock {\em Statistical Forecasting for Inventory Control}.
\newblock McGraw-Hill, 1959.

\bibitem{brown1967decision}
R.G. Brown.
\newblock {\em Decision Rules for Inventory Management}.
\newblock Holt, Rinehart and Winston, 1967.

\bibitem{brown1984materials}
R.G. Brown.
\newblock {\em Materials Management Systems: A Modular Library}.
\newblock Krieger, 1984.

\bibitem{10.2307/3008211}
T.~A. Burgin.
\newblock The gamma distribution and inventory control.
\newblock {\em Operational Research Quarterly (1970-1977)}, 26(3):507--525,
  1975.

\bibitem{8285761c-e9d6-323a-bbd4-de8efa04aa5d}
T.~A. Burgin and A.~R. Wild.
\newblock Stock control-experience and usable theory.
\newblock {\em OR}, 18(1):35--52, 1967.

\bibitem{lossdistributions}
Krzysztof Burnecki, Adam Misiorek, and Rafał Weron.
\newblock Loss distributions.
\newblock {\em University Library of Munich, Germany, MPRA Paper}, 01 2010.

\bibitem{articleCarlson}
Phillip Carlson.
\newblock On the distribution of lead time demand.
\newblock {\em Journal of Industrial Engineering}, 15:87 -- 94, 03 1964.

\bibitem{articleCarlson2}
Phillip Carlson.
\newblock An alternate model for lead-time demand: Continuous-review inventory
  systems.
\newblock {\em Decision Sciences}, 13:120 -- 128, 06 2007.

\bibitem{207bf20a-108d-3b3d-9619-1f323adac508}
Beda Chan.
\newblock An algorithm for lead time demand distributions.
\newblock {\em The Journal of the Operational Research Society},
  33(11):1005--1011, 1982.

\bibitem{COBB20131842}
Barry~R. Cobb, Rafael Rumí, and Antonio Salmerón.
\newblock Inventory management with log-normal demand per unit time.
\newblock {\em Computers {\&} Operations Research}, 40(7):1842--1851, 2013.

\bibitem{512d921f-2e73-3182-8129-e175ce204488}
Chandrasekhar Das.
\newblock Approximate solution to the (q, r) inventory model for gamma lead
  time demand.
\newblock {\em Management Science}, 22(9):1043--1047, 1976.

\bibitem{articleDas}
Chandrasekhar Das.
\newblock Explicit formulas for the order size and reorder point in certain
  inventory problems.
\newblock {\em Naval Research Logistics Quarterly}, 23:25 -- 30, 03 1976.

\bibitem{DAS1983267}
Chandrasekhar Das.
\newblock Inventory control for lognormal demand.
\newblock {\em Computers {\&} Operations Research}, 10(3):267--276, 1983.

\bibitem{DESCHRIJVER20121375}
Steven~K. {De Schrijver}, El-Houssaine Aghezzaf, and Hendrik Vanmaele.
\newblock Double precision rational approximation algorithm for the inverse
  standard normal first order loss function.
\newblock {\em Applied Mathematics and Computation}, 219(3):1375--1382, 2012.

\bibitem{745ab1c9-7630-3ecd-b4b1-574fb6a20b59}
F.~Y. Edgeworth.
\newblock The mathematical theory of banking.
\newblock {\em Journal of the Royal Statistical Society}, 51(1):113--127, 1888.

\bibitem{doi:10.1287/mnsc.25.8.777}
Richard Ehrhardt.
\newblock The power approximation for computing (s, s) inventory policies.
\newblock {\em Management Science}, 25(8):777--786, 1979.

\bibitem{hadley1963analysis}
G.~Hadley and T.M. Whitin.
\newblock {\em Analysis of Inventory Systems}.
\newblock International series in management). Prentice-Hall, 1963.

\bibitem{Herron01061976}
David Herron.
\newblock Industrial engineering applications of abc curves.
\newblock {\em A I I E Transactions}, 8(2):210--218, 1976.

\bibitem{articleHoward}
Keith Howard and Philip Schary.
\newblock Product line and inventory strategy.
\newblock {\em Decision Sciences}, 3:41 -- 58, 06 2007.

\bibitem{Howard1976}
Keith Howard and Philip~B. Schary.
\newblock {\em Logistics Strategy and Inventory Decisions}, pages 138--160.
\newblock Springer US, Boston, MA, 1976.

\bibitem{articleMagistad}
Magistad J.~G.
\newblock Some discrete distributions associated with life testing.
\newblock {\em Proceedings, Seventh National Symposium on Reliability, and
  Quality Control, IRE}, pages 1--11, 1961.

\bibitem{doi:10.1080/14697680903081881}
Simai~He Li~Chen and Shuzhong Zhang.
\newblock When all risk-adjusted performance measures are the same: in praise
  of the sharpe ratio.
\newblock {\em Quantitative Finance}, 11(10):1439--1447, 2011.

\bibitem{nahmias2015production}
S.~Nahmias and T.L. Olsen.
\newblock {\em Production and Operations Analysis: Seventh Edition}.
\newblock Waveland Press, 2015.

\bibitem{articleOphokenshi}
Nwoba Ophokenshi, Walford Chukwu, and Maliki Olaniyi.
\newblock Analysis of an inventory system for items with stochastic demand and
  time dependent three-parameter weibull deterioration function.
\newblock {\em Applied Mathematics}, 10:728--742, 01 2019.

\bibitem{https://doi.org/10.1002/nav.3800300216}
J.~K. Ord and U.~Bagchi.
\newblock The truncated normal–gamma mixture as a distribution for lead time
  demand.
\newblock {\em Naval Research Logistics Quarterly}, 30(2):359--365, 1983.

\bibitem{doi:10.1287/opre.33.1.134}
Evan~L. Porteus.
\newblock Numerical comparisons of inventory policies for periodic review
  systems.
\newblock {\em Operations Research}, 33(1):134--152, 1985.

\bibitem{02d5ea92-1a76-39f3-92d6-b5f587e3ae55}
Kelly Price, Barbara Price, and Timothy~J. Nantell.
\newblock Variance and lower partial moment measures of systematic risk: Some
  analytical and empirical results.
\newblock {\em The Journal of Finance}, 37(3):843--855, 1982.

\bibitem{doi:10.1080/17509653.2022.2134222}
Jafar Razmi, Fazle Baki, and Ali Sabbaghnia.
\newblock A dynamic ordering policy for a three echelon supply chain with
  backordering for perishable goods.
\newblock {\em International Journal of Management Science and Engineering
  Management}, 0(0):1--11, 2022.

\bibitem{Rossi_2014}
Roberto Rossi, S.~Armagan Tarim, Steven Prestwich, and Brahim Hnich.
\newblock Piecewise linear lower and upper bounds for the standard normal first
  order loss function.
\newblock {\em Applied Mathematics and Computation}, 231:489--502, mar 2014.

\bibitem{Scarf1960}
Herbert Scarf.
\newblock The optimality of (s, s) policies in the dynamic inventory problem.
\newblock {\em Math Methods Soc Sci}, 01 1960.

\bibitem{200fda1f-f795-3a50-b422-4d66e1cfba96}
Haim Shore.
\newblock Simple approximations for the inverse cumulative function, the
  density function and the loss integral of the normal distribution.
\newblock {\em Journal of the Royal Statistical Society. Series C (Applied
  Statistics)}, 31(2):108--114, 1982.

\bibitem{silver1998inventory}
E.A. Silver, D.F. Pyke, and R.~Peterson.
\newblock {\em Inventory Management and Production Planning and Scheduling}.
\newblock John Wiley and Sons, New York, third edition, 1998.

\bibitem{SNYDER1984373}
R.D. Snyder.
\newblock Inventory control with the gamma probability distribution.
\newblock {\em European Journal of Operational Research}, 17(3):373--381, 1984.

\bibitem{article}
S.~Sunoj and S.~Maya.
\newblock The role of lower partial moments in stochastic modeling.
\newblock {\em Metron - International Journal of Statistics}, LXVI:223--242, 01
  2008.

\bibitem{SYNTETOS200636}
Aris~A. Syntetos and John~E. Boylan.
\newblock On the stock control performance of intermittent demand estimators.
\newblock {\em International Journal of Production Economics}, 103(1):36--47,
  2006.

\bibitem{TADIKAMALLA1979553}
Pandu~R Tadikamalla.
\newblock The lognormal approximation to the lead time demand in inventory
  control.
\newblock {\em Omega}, 7(6):553--556, 1979.

\bibitem{doi:10.1057/jors.1961.8}
C.~J. Taylor.
\newblock The application of the negative binomial distribution to stock
  control problems.
\newblock {\em Journal of the Operational Research Society}, 12(2):81--88,
  1961.

\bibitem{https://doi.org/10.2307/1235218}
Lester~G. Telser.
\newblock The lognormal distribution, j. aitchison and j. a. c. brown,
  cambridge, england: Cambridge university press, 1957, pp. xviii, 176. \$6.50.
\newblock {\em American Journal of Agricultural Economics}, 41(1):161--162,
  1959.

\bibitem{TIJMS1984175}
H.C. Tijms and H.~Groenevelt.
\newblock Simple approximations for the reorder point in periodic and
  continuous review (s, s) inventory systems with service level constraints.
\newblock {\em European Journal of Operational Research}, 17(2):175--190, 1984.

\bibitem{Trux1971}
Walter~R Trux.
\newblock {\em Data processing for purchasing and stock control}.
\newblock McGraw-Hill, 1971.

\bibitem{articleTyworth}
John Tyworth, Yuanming Guo, and Ram Ganeshan.
\newblock Inventory control under gamma demand and random lead time.
\newblock {\em Journal of Business Logistics}, 17:291--304, 01 1996.

\bibitem{b07c7eb1-7b50-3adc-9ee9-442be763eb69}
Robert~L. Winkler, Gary~M. Roodman, and Robert~R. Britney.
\newblock The determination of partial moments.
\newblock {\em Management Science}, 19(3):290--296, 1972.

\bibitem{zipkin2000foundations}
P.~Zipkin.
\newblock {\em Foundations of Inventory Management}.
\newblock McGraw-Hill Companies,Incorporated, 2000.

\end{thebibliography}

\end{document}